\definecolor{webgreen}{rgb}{0,.5,0}
\definecolor{webbrown}{rgb}{.6,0,0}
\newcommand{\NN}{{\mathbb N}}
\newcommand{\ZZ}{{\mathbb Z}}
\newcommand{\CC}{{\mathbb C}}
\newcommand{\RR}{{\mathbb R}}
\newcommand{\Primes}{{\mathbb P}}
\newcommand{\SF}{{\mathbb S}}
\newcommand{\Mod}[1]{\, (\mathrm{mod}\, #1)}
\theoremstyle{plain}
\newtheorem{theorem}{Theorem}
\newtheorem{lemma}{Lemma}
\newtheorem{proposition}{Proposition}
\theoremstyle{definition}
\theoremstyle{remark}
\newtheorem{remark}{Remark}
\begin{document}
	
	\begin{center}
		\vskip 1cm{\LARGE\bf  On Ramanujan expansions and primes in arithmetic progressions
		}
		\vskip 1cm 
		\large
		Maurizio Laporta\\
		Universit\`a degli Studi di Napoli\\
		Dipartimento di Matematica e Applicazioni \\
		Complesso di Monte S. Angelo, Via Cinthia\\
		80126 Napoli (NA), Italy\\
		Orcid: 0000-0001-5091-8491\\
		\href{mailto:mlaporta@unina.it}{\tt mlaporta@unina.it}\\
	\end{center}

\begin{abstract} A celebrated theorem of Delange gives a sufficient condition for an arithmetic function to be the sum of the associated Ramanujan expansion with the coefficients provided by a previous result of Wintner.  By applying the Delange theorem to the correlation of the von Mangoldt function with its incomplete form,
	we deduce an inequality involving the counting function of the prime numbers in arithmetic progressions. A remarkable aspect is that such an inequality is equivalent to
	the famous conjectural formula
	by  Hardy and Littlewood for the twin primes.
\end{abstract}

\noindent 2010 {\it Mathematics Subject Classification}: Primary 11N05; Secondary 11P32, 11N37.

\noindent \emph{Keywords:} Delange's theorem, Ramanujan expansion, primes in arithmetic progressions, twin primes

\section{\large Introduction and statement of the main result}\label{sec1}

Some basic notations and definitions are included in the next  section. Further notations and definitions are introduced by their first occurrence. A celebrated theorem of Delange \cite{[De]}, \cite[Th. 3]{[M]}, \cite[VIII.2, Th. 2.1]{[ScSp]} states that if the arithmetic function $f:\NN\rightarrow\CC$ is such that the series
\begin{equation}\label{delange}
	\sum_{n=1}^{\infty}2^{\omega(n)}
	\frac{|(\mu\ast f)(n)|}{n}
\end{equation} 
converges,  then we have the absolutely convergent Ramanujan expansion
\begin{equation}\label{WRE}
	f(n)=\sum_{q=1}^{\infty}\widehat f(q)c_q(n),\quad \forall n\in\NN,
\end{equation} 
where $c_q(n)$ is the Ramanujan sum \cite{[R]} and the coefficients are \cite[\S 35]{[W]}
\begin{equation}\label{wincoeff}
	\widehat f(q)=\frac{1}{\varphi(q)}\lim_{x\to\infty}\frac{1}{x}\sum_{n\le x}f(n)c_q(n)=\frac{1}{q}\sum_{n=1}^{\infty}
	\frac{(\mu\ast f)(nq)}{n},\quad (q\in\NN).
\end{equation}
Here, $\omega(n)$ is
the number of the distinct prime factors of $n$, $\ast$ denotes
the Dirichlet product, $\mu$ is  the M\"obius function and $\varphi$ is the Euler totient. Let us also recall the von Mangoldt function \cite[\S1.4]{[IK]} 
$$
\Lambda(n)=-\sum_{d|n}\mu(d)\log d,
$$ 
which yields $\Lambda(n)=\log p$ if $n=p^\alpha$ for some prime number $p$ and $\alpha\in\NN$, $\Lambda(n)=0$ otherwise.  
By using the well-known property \cite[Th.\ 4.1]{[MV]}
\begin{equation}\label{RamSumConv}
	\sum_{q|m}c_q(n)=
	\begin{cases}
		m, &\text{if $m|n$,}
		\\
		0, &\text{otherwise,}
	\end{cases}
\end{equation} 
we see that for the incomplete $\Lambda$-function of range $N\in\NN$ \cite[\S19.2]{[IK]}
$$
n\in \NN\longrightarrow\ \Lambda_{_N}(n)=-\sum_{{d\le N}\atop {d|n}}\mu(d)\log d,
$$
the (finite) Ramanujan expansion 
\begin{equation}\label{lambdaFRE}
	\Lambda_{_N}(n)=-\sum_{d\le z}\frac{\mu(d)\log d}{d}\sum_{q|d}c_q(n)
	=\sum_{q\le N}\widehat{\Lambda_{_N}}(q)c_q(n), 
\end{equation}
holds with 
\begin{equation}\label{lambdabound}
	\widehat{\Lambda_{_N}}(q)= -\frac{\mu(q)}{q}\sum_{{d\le N/q}\atop {(d,q)=1}}\frac{\mu(d)\log(dq)}{d}\ll \frac{L^2}{q},\quad \forall q\le N.
\end{equation}
Hereafter, the symbol $(m,n)$ denotes the greatest common divisor of the integers $m,n$.  Moreover, we have set $L=\log N$ for brevity. 

Let us consider the correlation  of $\Lambda$ and $\Lambda_{_N}$, i.e., the arithmetic function
$$
n\in \NN\longrightarrow\ C_{\Lambda,\Lambda_{_N}}(N,n)= \sum_{m\le N}\Lambda(m)\Lambda{_N}(m+n).
$$
Further, for any  $N\in\NN$ and any $h\in\ZZ$ let us set
$$
\Delta(N,h)=\sum_{q\le N}\widehat{\Lambda_{_N}}(q)\sum_{k\in\ZZ_q^*}
\psi(N;q,k)\delta(h; q,k),
$$
where $\ZZ_q^*=\{m\in\NN\cap[1,q]: (m,q)=1\}$,
$$
\psi(N;q,k)=\sum_{n\le N\atop n\equiv k\, \Mod q}\Lambda(n)
$$
is the (weighted) counting function of prime numbers in arithmetic progressions, and
$$
\delta(h;q,k)=c_q(k+h)-\frac{\mu(q)}{\varphi(q)}c_q(h).
$$
This is the deviation of the Ramanujan sum $c_q(k+h)$, $k\in\ZZ_q^*$, from its arithmetic mean expressed by 
Cohen's identity as \cite[Cor.\ 7.2]{[Coh1]}
\begin{equation}\label{cohen}
	\frac{1}{\varphi(q)}\sum_{k\in\ZZ_q^*}
	c_q(k+h)=\frac{\mu(q)}{\varphi(q)}c_q(h).
\end{equation}

In Sect.\ \ref{sec4} we prove the following result.

\begin{theorem}\label{DelangeBV} Let the series
	\begin{equation}\label{DHCorr}
		\sum_{m=1}^\infty\frac{2^{\omega(m)}}{m}
		\Big|\sum_{d|m}\mu(d)C_{\Lambda,\Lambda_{_N}}(N,m/d)\Big|
	\end{equation}  
	be convergent for every sufficiently large $N\in\NN$. For any given $h\in\NN$ and  every real number $\varepsilon>0$ one has 
	\begin{equation}\label{th1}
		\Delta(N,h)\ll_{\varepsilon} (N+h)^\varepsilon.
	\end{equation} 
\end{theorem}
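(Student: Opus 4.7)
My plan is to apply Delange's theorem to the arithmetic function $n\mapsto C_{\Lambda,\Lambda_{_N}}(N,n)$, whose associated Delange series (\ref{delange}) is precisely (\ref{DHCorr}). This yields an absolutely convergent Ramanujan expansion $C_{\Lambda,\Lambda_{_N}}(N,n)=\sum_{q=1}^{\infty}\widehat{C}(q)c_q(n)$. To identify the coefficients I would substitute the finite expansion (\ref{lambdaFRE}) of $\Lambda_{_N}$ into $C_{\Lambda,\Lambda_{_N}}$ and apply the mean-value formula in (\ref{wincoeff}), which reduces to evaluating
$$\lim_{x\to\infty}\frac{1}{x}\sum_{n\le x}c_{q'}(m+n)\,c_q(n).$$
Using the character representation $c_r(j)=\sum_{a\in\ZZ_r^*}e^{2\pi iaj/r}$, a short coprimality argument shows this limit equals $c_q(m)$ when $q=q'$ and vanishes otherwise. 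Hence $\widehat{C}(q)=0$ for $q>N$, while $\widehat{C}(q)=\widehat{\Lambda_{_N}}(q)\varphi(q)^{-1}\sum_{m\le N}\Lambda(m)\,c_q(m)$ for $q\le N$.

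The central step is to compare this Delange expansion with the direct identity
$$C_{\Lambda,\Lambda_{_N}}(N,n)=\sum_{q\le N}\widehat{\Lambda_{_N}}(q)\sum_{k=1}^{q}c_q(k+n)\,\psi(N;q,k)$$
obtained by substituting (\ref{lambdaFRE}) and grouping $m$ by residue class modulo $q$. Splitting the $k$-sum at the condition $(k,q)=1$ and applying Cohen's identity (\ref{cohen}) within $\ZZ_q^*$, the coprime part separates into a main term matching the Delange expansion coefficient exactly, plus $\Delta(N,n)$. The non-coprime part is supported on $m=p^\alpha$ with $p\mid q$, so equating the two expressions of $C_{\Lambda,\Lambda_{_N}}(N,n)$ and solving for $\Delta$ yields
$$\Delta(N,n)=-\sum_{q\le N}\widehat{\Lambda_{_N}}(q)\sum_{{p\mid q}\atop{p^\alpha\le N}}\log p\,\bigg[c_q(p^\alpha+n)-\frac{c_q(n)\,c_q(p^\alpha)}{\varphi(q)}\bigg].$$

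The final task is to bound this quantity by $(N+h)^\varepsilon$. Since $\widehat{\Lambda_{_N}}(q)=0$ for $q$ non-squarefree (the $\mu(q)$ factor in (\ref{lambdabound})), the outer sum is restricted to squarefree $q\le N$; for such $q$ and $p\|q$ one has the explicit evaluation $c_q(p^\alpha)=(p-1)\mu(q/p)$. Inverting the order of summation so that the prime power $p^\alpha$ becomes the outer variable, the inner sum runs over squarefree multiples $q=pq'$ of $p$ with $(q',p)=1$ and $q'\le N/p$; writing out $\widehat{\Lambda_{_N}}(q)$ from (\ref{lambdabound}) and exploiting $\mu(pq')=-\mu(q')$ together with the sign in $c_q(p^\alpha)$, M\"obius cancellation in the $q'$-sum should reduce the inner expression to one controllable by elementary divisor estimates and $\tau(m)\ll_\varepsilon m^\varepsilon$, yielding the claimed bound. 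The main obstacle will be extracting this cancellation cleanly: the trivial estimates $|\widehat{\Lambda_{_N}}(q)|\ll L^2/q$ and $|c_q(r)|\le(q,r)$ give only $O(NL^{O(1)})$, which is far too weak, so the $(N+h)^\varepsilon$ bound depends essentially on the arithmetic structure of the squarefree $q$-sum once the prime-power parameter has been isolated and on how the dependence on $n$ is absorbed via the factor $(q,p^\alpha+n)$.
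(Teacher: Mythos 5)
Your reduction is the same as the paper's. Applying Delange's theorem to $n\mapsto C_{\Lambda,\Lambda_{_N}}(N,n)$ and computing the Wintner coefficients through the orthogonality relation for Ramanujan sums is exactly Lemma \ref{CM}, and your bookkeeping (expand $\Lambda_{_N}$ via \eqref{lambdaFRE}, split the residues $k$ modulo $q$ according to $(k,q)=1$ or not, apply Cohen's identity on the coprime part) leads to the same identity as \eqref{ShiftedCorrVM}: after $C_{\Lambda,\Lambda_{_N}}(N,h)$ cancels against its Delange expansion, $\Delta(N,h)$ equals minus the remainder
$$
{\cal R}(N,h)=\sum_{n,q\le N\atop (n,q)>1}\Lambda(n)\widehat{\Lambda_{_N}}(q)\Bigl(c_q(n+h)-\frac{c_{q}(n)c_q(h)}{\varphi(q)}\Bigr),
$$
supported on $n=p^{\alpha}$ with $p\mid q$ and $q$ squarefree. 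Up to this point the proposal is correct.

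The gap is the estimate ${\cal R}(N,h)\ll_\varepsilon(N+h)^\varepsilon$, which is the bulk of the actual proof and which you leave as a hoped-for ``M\"obius cancellation in the $q'$-sum.'' No cancellation is needed, and your claim that the bounds $\widehat{\Lambda_{_N}}(q)\ll L^2/q$ and $|c_q(r)|\le(q,r)$ give only $O(NL^{O(1)})$ is not accurate: absolute values suffice provided the gcd is organized as a divisor sum instead of being bounded by $q$. Concretely, write $q=pq'$ with $(q',p)=1$, so that $\widehat{\Lambda_{_N}}(pq')\ll L^2/(pq')$ supplies the crucial factor $1/p$; for squarefree $q$ one has $c_q(p^\alpha+h)=\varphi(t)\mu(q/t)$ with $t=(q,p^\alpha+h)$, and summing over the divisor $t\mid p^\alpha+h$ yields $\sum_{t\mid p^\alpha+h}\varphi(t)/t\le\tau(p^\alpha+h)\ll_\varepsilon(N+h)^\varepsilon$, while $\sum_{q'\le N}1/q'\ll L$ and $\sum_{p\le N}\frac{\log p}{p}[\log_pN]\ll L\log L$; the subtracted term is handled with $|c_q(p^\alpha)|=\varphi(p)|\mu(q/p)|$ and $\sum_{q\le N}(q,h)/(q\varphi(q))\ll\log L\,\max\{1,\log h\}$. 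One also needs the observation that $q=p$ contributes $0$ (so the sum runs over $q\in\SF\setminus\Primes$), and a case distinction according to whether $p\mid h$, since the meaning of the condition $(pq',p^\alpha+h)=t$ changes. None of this is deep, but it is precisely the content of Section \ref{sec4}, and without it the theorem is not proved; as written, your final step both omits the decisive computation and points it in the wrong direction.
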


\begin{remark} Plainly, Cohen's identity \eqref{cohen}
	yields
	$$
	\Delta(N,h)=\sum_{q\le N}\widehat{\Lambda_{_N}}(q)\sum_{k\in\ZZ_q^*}
	\delta(h; q,k)\big(\psi(N;q,k)-M\big),
	$$
	provided that $M=M(N,q)$ does not depend on $k\in\ZZ_q^*$. In particular, \eqref{th1} is equivalent to
	$$
	\sum_{q\le N}\widehat{\Lambda_{_N}}(q)\sum_{k\in\ZZ_q^*}
	\delta(h; q,k)\Big(\psi(N;q,k)-\frac{1}{\varphi(q)}\sum_{n\le N\atop (n,q)=1}\Lambda(n)\Big)\ll_{\varepsilon} (N+h)^\varepsilon.
	$$
	In Sect.\ \ref{sec5} we give some further reformulations of $\Delta(N,h)$, where the function $\psi(N;q,k)$ is not explicitly involved.
\end{remark}

\begin{remark} Coppola \cite{[Co]} has proved that if $k\in\NN$ is such that $0<k<N^{1-\delta}$, with $\delta \in (0,1/2)$ fixed, then
	\begin{equation}\label{twinHL}
		\sum_{q\le N}\frac{\widehat{\Lambda_{_N}}(q) }{\varphi(q)}c_q(2k)\sum_{n\le N}\Lambda(n)c_{q}(n)=N\sum_{q=1}^{\infty}\frac{\mu^2(q)} {\varphi^2(q)}c_{q}(2k)+O
		\Big(\frac{N}{e^{c\sqrt{L}}}\Big), 
	\end{equation}
	where $c>0$ is an absolute constant. Here the main term on the right hand side is the one of the celebrated conjectural formula
	by  Hardy and Littlewood for the correlation
	$$
	h\in \NN\longrightarrow\ C_{\Lambda,\Lambda}(N,h)= \sum_{n\le N}\Lambda(n)\Lambda(n+h),
	$$
	i.e., for the (weighted) counting function of the number of $2k$-twin primes up to $N$ \cite[Conjecture B]{[HL]}. Now, writing
	$$
	C_{\Lambda,\Lambda}(N,h)=-\sum_{n\le N}\Lambda(n)\sum_{d|n+h}\mu(d)\log d, 
	$$
	observe that the conditions  $n\le N$ and $d|n+h$ yield $d\le N+h$  in the second sum,  so that $\Lambda(n+h)=\Lambda_{_{N+h}}(n+h)$.
	Consequently, 
	\begin{eqnarray}
		C_{\Lambda,\Lambda}(N,h)&=&\nonumber -\sum_{n\le N}\Lambda(n)\sum_{{d\le N}\atop {d|n+h}}\mu(d)\log d
		-\sum_{n\le N}\Lambda(n)\sum_{{N<d\le N+h}\atop {d|n+h}}\mu(d)\log d
		\\
		&=&\nonumber C_{\Lambda,\Lambda_{_N}}(N,h)-
		\sum_{N<d\le N+h}\mu(d)\log d\sum_{{n\le N}\atop {n\equiv -h\, (d)}}\Lambda(n) 
		\\
		&=&\nonumber C_{\Lambda,\Lambda_{_N}}(N,h)+O\big(hL \log(N+h)\big),
	\end{eqnarray}
	after noticing that for $d>N$ there is at most one $n\in\NN$ such that $n\le N$ and $n\equiv -h\Mod d$. In view of \eqref{twinHL} and Lemma \ref{CM} below (see Sect.\ \ref{sec3}), this reveals that the Delange hypothesis on (\ref{DHCorr}) implies the Hardy-Littlewood conjecture for the $2k$-twin primes. 
	This also emphasizes the strength of \eqref{th1} because,  after a quick inspection of \eqref{ShiftedCorrVM} in Sect.\ \ref{sec4}, it turns out  that such a far-reaching conjecture   is equivalent to $\Delta(N,2k)=o(N)$. Unfortunately, we do not know how to show a sufficient cancellation between the summands of $\Delta(N,2k)$ unconditionally. We hope that
	the different formulations of $\Delta(N,h)$ given in Sect.\ \ref{sec5} might provide with useful insights for 
	future considerations in this direction.
	Finally, we refer the reader to \cite{[GP]}, where for the first time it was indicated how heuristically the theory of Ramanujan expansions leads to the Hardy-Littlewood twin primes conjecture.
\end{remark}

\section{Notation}\label{sec2}
\begin{itemize}
	\item $\NN$ is the set of positive integer.
	\item $\Primes$ is the set of positive prime numbers; the letter $p$ (with or without subscript)  is reserved for the prime numbers.
	\item The prime power factorization of $n\in\NN$ is 
	$\prod_{p\in\Primes}p^{v_p(n)}$, where all but a finite number of the exponents $v_p(n)$ are zero.
	\item The symbol $(m,n)$ denotes the greatest common divisor of the integers $m,n$.
	\item $(h_1\ast h_2)(n)=\sum_{d|n}h_1(d)h_2(n/d)$ is
	the Dirichlet product of the arithmetic functions $h_1,h_2$.
	\item  $\omega(n)=\#\{p\in\Primes:\, p|n\},\ \forall n\in\NN$. 
	\item $\tau(n)=\#\{d\in\NN:\, d|n\},\ \forall n\in\NN$.
	\item $\mu$ is  the M\"obius function, i.e., $\mu(n)=(-1)^{\omega(n)}$ if $n$ is square-free, $\mu(n)=0$ otherwise.
	\item $\SF=\{q\in\NN:\, \mu(q)\not=0\}$ is the set of the square-free positive integers.
	\item Mainly within formul\ae, we often write $m\equiv\, n\ (k)$ to mean that $m\equiv\, n\, \Mod{k}$, i.e., $k$ divides $m-n$.
	\item Unless otherwise stated, in sums like $\sum_{d\le z}$ it is assumed that $d\in\NN$.
	\item The Ramanujan sum is
	$$
	c_q(n)=\sum_{j\in\ZZ_q^*}e\Big(\frac{jn}{q}\Big),
	$$
	where $\ZZ_q^*=\{m\in\NN\cap[1,q]: (m,q)=1\}$ and $e(x)=\exp(2\pi ix)$ for any real number $x$.
\end{itemize}

Warning. It is clear from \eqref{lambdabound} that if $q\not\in\SF$, then $\widehat{\Lambda_{_N}}(q)=0$. The reader is cautioned that most of the considerations in sections \ref{sec4} and \ref{sec5}  are valid only because it is often tacitly assumed that we are dealing with square-free integers of the $\widehat{\Lambda_{_N}}$ support.  For example, we shall freely use without explicit mention the fact that if $q\in\SF$, then $(d,q/d)=1$ for all $d|q$, so that $g(q)=g(d)g(q/d)$ for any multiplicative arithmetic function $g$ involved here.

\section{Lemmata}\label{sec3}

In the first lemma we resume some properties of the Ramanujan sums.

\begin{lemma}
	\begin{equation}\label{RamSum}
		c_q(n)=\varphi(q)\frac{\mu\big(q/(n,q)\big)} {\varphi\big(q/(n,q)\big)},
		\enspace 
		\forall q,n\in\NN;
	\end{equation}
	\begin{equation}\label{RamSumBound}
		|c_q(n)|\le (n,q),
		\enspace 
		\forall q,n\in\NN;
	\end{equation}
	\begin{equation}\label{ramaperiodic}
		\sum_{t=1}^qc_q(t)=
		\begin{cases}
			1, &\text{if $q=1$,}
			\\
			0, &\text{otherwise,}
		\end{cases}
		\quad \forall q\in\NN;
	\end{equation}
	\begin{equation}\label{ortoRam}
		\lim_{x\to \infty}\frac{1}{x}\sum_{h\le x}c_r(h+n)c_q(h)=
		\begin{cases}
			c_q(n)\ &\mbox{if $r=q$,}\\ 0\ &\mbox{otherwise,}\ 
		\end{cases} \quad \forall r,q\in\NN, \forall n\in\ZZ;
	\end{equation}
	\begin{equation}\label{toth}
		\sum_{k\in\ZZ_q^*}\chi(k)c_q(k+n)=d\,\mu(q/d)c_{q/d}(n)\chi^*(-n),\quad \forall q\in\SF, \forall n\in\ZZ,
	\end{equation}
	where $\chi$ is a Dirichlet character $\Mod q$ and $\chi^*$ is the primitive character $\Mod d$ that induces $\chi$, so that the conductor of $\chi$ is $d_\chi=d$.

\end{lemma}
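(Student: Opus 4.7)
The plan is to derive all five identities from two basic representations of $c_q(n)$: its definition as an exponential sum and H\"older's formula
$$
c_q(n) = \sum_{d \mid (n,q)} d\,\mu(q/d),
$$
which is immediate from M\"obius inversion applied to $\sum_{j=1}^q e(jn/q) = q\cdot\mathbf{1}_{q\mid n}$.

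First I would prove \eqref{RamSum}: both sides are multiplicative in $q$ at fixed $n$, so it suffices to verify the case $q = p^\alpha$ with $(n,q) = p^\beta$, a direct computation from H\"older's formula. The bound \eqref{RamSumBound} then follows at once, since $|\mu(\cdot)|\le 1$ and a prime-by-prime factorization gives
$$
\frac{\varphi(q)}{\varphi(q/(n,q))} = (n,q)\!\!\!\!\prod_{\substack{p\,\mid\,(n,q)\\ p\,\nmid\,q/(n,q)}}\!\!\!\!(1-1/p) \le (n,q).
$$
For \eqref{ramaperiodic} I would just swap sums in the definition: $\sum_{t=1}^q c_q(t) = \sum_{j\in\ZZ_q^*}\sum_{t=1}^q e(jt/q)$; the inner sum is $q$ if $q\mid j$ and $0$ otherwise, and $q\mid j$ with $(j,q) = 1$ and $j\in[1,q]$ forces $q = 1$.

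Orthogonality \eqref{ortoRam} I would prove by expanding both Ramanujan sums and using $\frac{1}{x}\sum_{h\le x}e(\alpha h) \to \mathbf{1}_{\alpha\in\ZZ}$: the only surviving pairs $(a,b)\in\ZZ_r^*\times\ZZ_q^*$ satisfy $a/r + b/q \in \ZZ$. Writing $g = (r,q)$, $r = gr'$, $q = gq'$ with $(r',q') = 1$, this condition together with $(a,r) = (b,q) = 1$ forces $r' = q' = 1$, i.e.\ $r = q$; then $b \equiv -a\,(q)$, so $b = q - a$ and the remaining contribution is $\sum_{a\in\ZZ_q^*}e(an/q) = c_q(n)$.

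The main obstacle is T\'oth's identity \eqref{toth}. I would start from H\"older's formula to rewrite the sum as
$$
\sum_{k\in\ZZ_q^*}\chi(k)c_q(k+n) = \sum_{e\mid q}e\,\mu(q/e)\sum_{\substack{k\in\ZZ_q^*\\ k\equiv -n\,(e)}}\chi(k).
$$
Since $q$ is squarefree, $(e,q/e) = 1$, and CRT factors $\chi = \chi_e\chi_f$ with $f = q/e$; the inner sum vanishes unless $(n,e) = 1$, in which case it equals $\chi_e(-n)\sum_{k_2\in\ZZ_f^*}\chi_f(k_2)$. By character orthogonality this is zero unless $\chi_f$ is principal, and the crucial conductor step is that $\chi_f\equiv 1$ forces $\chi$ to be induced from a character $\bmod e$, so by minimality of $d = d_\chi$ we get $d\mid e$. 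Writing $e = de'$ with $e'\mid m := q/d$ (automatic coprimality $(d,e') = 1$ by squarefreeness), one has $\chi_e(-n) = \chi^*(-n)$, and the identity reduces to the auxiliary claim
$$
\sum_{\substack{e'\mid m\\ (n,e')=1}}\!e'\,\mu(m/e')\,\varphi(m/e') = \mu(m)\,c_m(n).
$$
I would verify this by substituting $f' = m/e'$, then writing $f' = gh$ with $g = (n,m)$ and $h\mid m/g$, applying the multiplicative evaluation $\sum_{h\mid m/g}\mu(h)\varphi(h)/h = g/m$ (a prime-by-prime check), and recognizing the resulting $\mu(g)\varphi(g) = \sum_{d''\mid g}d''\mu(d'')$ as $\mu(m)c_m(n)$ via H\"older in reverse (all valid because $m$ is squarefree). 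The case $(n,d)>1$ makes $\chi^*(-n) = 0$ on the right, while on the left the constraints $d\mid e$ and $(n,e) = 1$ become incompatible, so both sides vanish consistently.
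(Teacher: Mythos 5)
Your proposal is correct, but it takes a genuinely more self-contained route than the paper. For \eqref{RamSumBound} and \eqref{ramaperiodic} the two arguments coincide (Euler-product comparison of $\varphi(q)/\varphi(q/(n,q))$, and swapping sums against the orthogonality of exponentials, respectively). The divergence is elsewhere: the paper does not prove \eqref{RamSum} and \eqref{ortoRam} at all, citing Montgomery--Vaughan and Murty, whereas you derive \eqref{RamSum} from Kluyver's formula $c_q(n)=\sum_{d\mid(n,q)}d\,\mu(q/d)$ plus multiplicativity, and prove the orthogonality \eqref{ortoRam} directly by expanding both Ramanujan sums and analyzing when $a/r+b/q\in\ZZ$ under the coprimality constraints. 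Most significantly, for \eqref{toth} the paper quotes T\'oth's Menon-type identity \cite[Cor.\ 2.6]{[T]} and then collapses the resulting divisor sum via the Brauer--Rademacher identity, while you give a from-scratch proof: Kluyver's formula turns the left side into a sum over $e\mid q$ of twisted character sums, CRT factorization $\chi=\chi_e\chi_f$ (valid since $q\in\SF$) plus orthogonality kills all terms except those with $d_\chi\mid e$ and $(n,e)=1$, and the remainder reduces to the divisor-sum identity $\sum_{e'\mid m,\,(n,e')=1}e'\mu(m/e')\varphi(m/e')=\mu(m)c_m(n)$, which you verify correctly (your evaluation $\mu(g)\varphi(g)$ with $g=(n,m)$ matches $\mu(m)c_m(n)$ for squarefree $m$). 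In effect you re-prove the needed squarefree case of T\'oth's theorem and the Brauer--Rademacher mechanism in one stroke. What each approach buys: the paper's is shorter and places \eqref{toth} in its proper context as a special case of a published general identity; yours is elementary, independent of the literature, and makes visible exactly where squarefreeness of $q$ enters (the coprime factorization $(e,q/e)=1$ and the conductor argument). One small point of rigor: you state that $\chi_f$ principal forces $d\mid e$, but the reduction also uses the converse (that every $e$ with $d\mid e$, $(n,e)=1$ contributes $\varphi(q/e)\chi^*(-n)$); this follows from the same prime-by-prime decomposition of $\chi$ over the squarefree modulus, and is worth a sentence. Also, the formula $c_q(n)=\sum_{d\mid(n,q)}d\,\mu(q/d)$ is usually attributed to Kluyver or von Sterneck; ``H\"older's formula'' traditionally refers to \eqref{RamSum} itself.
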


\begin{proof} For \eqref{RamSum} and 
	\eqref{ortoRam}  we refer the reader respectively to \cite[Th.\ 4.1]{[MV]} and \cite[Th.\ 1]{[M]}. Note that \eqref{RamSumBound} is obviously true if $(n,q)=1$. In case $(n,q)>1$,  we apply \eqref{RamSum} and the well-known property \cite[Th.\ 2.4]{[A1]}
	$$
	\frac{\varphi(n)}{n}=\prod_{p|n}\big(1-\frac{1}{p}\big),\quad \forall n\in\NN\setminus\{1\},
	$$
	to see that
	\begin{eqnarray}
		|c_q(n)|&\le&\nonumber \frac{\varphi(q)} {\varphi\big(q/(n,q)\big)}=(n,q)\prod_{p|q}\big(1-\frac{1}{p}\big)
		\prod_{p|q/(n,q)}\big(1-\frac{1}{p}\big)^{-1}\\
		&=&\nonumber(n,q)\prod_{p|(n,q)}\big(1-\frac{1}{p}\big)
		\le (n,q).
	\end{eqnarray}
	In order to prove \eqref{ramaperiodic}, it suffices to use the definition of the Ramanujan sum and  the well-known property \cite[Th.\ 8.1]{[A1]}
	$$
	\sum_{t=1}^qe\Big(\frac{jt}{q}\Big)=\begin{cases}
		q, &\text{if $q|j$,}
		\\
		0, &\text{otherwise.}
	\end{cases}
	$$
	Finally, the last equation \eqref{toth} is a particular instance of
	Toth's generalization of Cohen's identity \cite[Cor.\ 2.6]{[T]}
	$$
	\sum_{k\in\ZZ_q^*}\chi(k)c_q(k+n)=d\varphi(q)
	\chi^*(-n)\sum_{t|q/d\atop (t,n)=1}\frac{t\mu\big(q/(td)\big)}{\varphi(td)},
	$$
	combined with the Brauer-Rademacher identity \cite[Cor.\ 34]{[Coh1]}, \cite[Ch.\ 2]{[MC]} to see that for $q\in\SF$ we can write
	$$
	\sum_{t|q/d\atop (t,n)=1}\frac{t\mu\big(q/(td)\big)}{\varphi(td)}=
	\frac{1}{\varphi(d)}\sum_{t|q/d\atop (t,n)=1}\frac{t\mu\big(q/(td)\big)}{\varphi(t)}=
	\frac{\mu(q/d)}{\varphi(q)}c_{q/d}(n).
	$$
	The lemma is completley proved.
\end{proof}
\begin{remark} Let us recall that a first well-known consequence  of \eqref{RamSum} is that
	$c_{q}(n)$ is multiplicative with respect to $q$.
\end{remark}

The next lemma is an application of Delange's aforementioned theorem to 
the correlation $C_{\Lambda,\Lambda_{_N}}(N,n)$. See the work of Coppola and Murty \cite{[CM]} for a more general account on the Ramanujan expansions of correlations. 

\begin{lemma}\label{CM} If the series \eqref{DHCorr} converges for some $N\in\NN$, then
	$$
	C_{\Lambda,\Lambda_{_N}}(N,h)=	\sum_{q\le N}\frac{\widehat{\Lambda_{_N}}(q) }{\varphi(q)}c_q(h)\sum_{n\le N}\Lambda(n)c_{q}(n),\ \forall h\in\NN.
	$$  
\end{lemma}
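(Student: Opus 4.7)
The plan is to apply Delange's theorem directly to the function $f_N(h):=C_{\Lambda,\Lambda_{_N}}(N,h)$, viewed as a function of the shift $h$. The hypothesis that the series in \eqref{DHCorr} converges is exactly the Delange condition \eqref{delange} for $f=f_N$, since the inner sum $\sum_{d\mid m}\mu(d)C_{\Lambda,\Lambda_{_N}}(N,m/d)$ is $(\mu\ast f_N)(m)$. Consequently, \eqref{WRE} gives the absolutely convergent Ramanujan expansion
$$
C_{\Lambda,\Lambda_{_N}}(N,h)=\sum_{q=1}^\infty \widehat{f_N}(q)\,c_q(h),\qquad \widehat{f_N}(q)=\frac{1}{\varphi(q)}\lim_{x\to\infty}\frac{1}{x}\sum_{h\le x}C_{\Lambda,\Lambda_{_N}}(N,h)\,c_q(h),
$$
so the task reduces to computing these Wintner coefficients and showing that $\widehat{f_N}(q)=0$ for $q>N$.

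The key identity to exploit is that the inner function $\Lambda_{_N}$ has its own finite Ramanujan expansion \eqref{lambdaFRE}. First I would interchange summations (the outer sum over $m\le N$ is finite, so no analytic issue arises) to write
$$
\frac{1}{x}\sum_{h\le x}C_{\Lambda,\Lambda_{_N}}(N,h)c_q(h)=\sum_{m\le N}\Lambda(m)\sum_{r\le N}\widehat{\Lambda_{_N}}(r)\cdot\frac{1}{x}\sum_{h\le x}c_r(m+h)c_q(h).
$$
Now apply the orthogonality relation \eqref{ortoRam}: as $x\to\infty$, the innermost average tends to $c_q(m)$ when $r=q$ and to $0$ otherwise. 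Pulling the limit through the (finite) sums over $m$ and $r$, only the term $r=q$ survives, and this only when $q\le N$. Thus
$$
\widehat{f_N}(q)=\begin{cases} \dfrac{\widehat{\Lambda_{_N}}(q)}{\varphi(q)}\displaystyle\sum_{n\le N}\Lambda(n)c_q(n), & q\le N,\\[4pt] 0, & q>N.\end{cases}
$$

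Substituting into the Delange expansion collapses the outer series to a finite sum over $q\le N$, yielding exactly the claimed formula. The main technical obstacle is really just the justification for interchanging the limit in $x$ with the finite sums over $m\le N$ and $r\le N$, together with checking that the orthogonality relation \eqref{ortoRam} is applied in the correct order of arguments; once those book-keeping steps are done, the identity falls out. No estimates beyond \eqref{ortoRam} are required, which is why the hypothesis that \eqref{DHCorr} converges is used only to license the use of Delange's theorem itself.
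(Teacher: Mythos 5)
Your argument is correct and follows essentially the same route as the paper: set $f(h)=C_{\Lambda,\Lambda_{_N}}(N,h)$, invoke Delange's theorem under the hypothesis on \eqref{DHCorr}, expand $\Lambda_{_N}$ via \eqref{lambdaFRE}, and evaluate the Wintner coefficients through the orthogonality relation \eqref{ortoRam}, which kills all terms except $r=q\le N$. The interchange of the limit with the finite sums over $m\le N$ and $r\le N$ that you flag is indeed the only point needing care, and the paper handles it in exactly the same way.
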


\begin{proof} Let us set $f(h)=C_{\Lambda,\Lambda_{_N}}(N,h)$ for simplicity, and observe that
	the claimed expansion follows from Delange's theorem once the coefficients $\widehat f(q)$ are determined as in \eqref{wincoeff}.
	Indeed, we use \eqref{lambdaFRE} to write
	$$
	\sum_{h\le x}f(h)c_q(h)=
	\sum_{r\le N}\widehat{\Lambda_{_N}}(r)\sum_{n\le N}\Lambda(n)\sum_{h\le x}c_r(n+h)c_q(h), 
	\enspace 
	\forall q\in \NN.
	$$
	Therefore,  by applying \eqref{ortoRam} we get
	\begin{eqnarray}
		\widehat f(q)&=&\nonumber \frac{1}{\varphi(q)}\sum_{r\le N}\widehat{\Lambda_{_N}}(r)\sum_{n\le N}\Lambda(n)
		\lim_{x\to \infty} \frac{1}{x}\sum_{h\le x}c_r(n+h)c_q(h)\\
		&=&\nonumber
		\begin{cases}
			\displaystyle\frac{\widehat{\Lambda_{_N}}(q)}{\varphi(q)}\sum_{n\le N}\Lambda(n)c_q(n) &\mbox{if $q\le N$,}\\ 0 &\mbox{otherwise}.\ 
		\end{cases}
	\end{eqnarray}
	The lemma is completely proved.
\end{proof}

\begin{remark} Wintner \cite[\S 34]{[W]} observed that, although the convergence 
	of 
	\begin{equation}\label{wa}
		\sum_{n=1}^{\infty}\frac{|(\mu\ast f)(n)|}{n}
	\end{equation} 
	implies the existence of the coefficients \eqref{wincoeff}, it
	is compatible with the divergence of the series
	\eqref{WRE}. On the other hand, it was pointed out by Cohen \cite{[Coh2]} that absolutely convergent expansions \eqref{WRE} can be deduced for some special classes of multiplicative functions by assuming just the convergence of \eqref{wa}, which is clearly a condition weaker than the Delange hypothesis on \eqref{delange}. Recently, Coppola has announced a result that would allow us to replace 
	the hypothesis of the convergence of the series \eqref{DHCorr} in the previous lemma by that of the convergence of  
	\eqref{wa} with $f(n)=C_{\Lambda,\Lambda_{_N}}(N,n)$ (see https://arxiv.org/pdf/2012.11231.pdf). Of course, with such a result in place, Theorem \ref{DelangeBV} would be refined accordingly. 
\end{remark}

\section{\large The  proof of Theorem  \ref{DelangeBV}}\label{sec4}

Bearing in mind the definitions of $\psi(N;q,k)$ and $\delta(h; q,k)$, let us use the expansion \eqref{lambdaFRE} and the fact
$c_{q}(n)=\mu(q)$ for $(n,q)=1$ to write
\begin{eqnarray}\label{ShiftedCorrVM}
	\Delta(N,h)&=&\sum_{q\le N}\widehat{\Lambda_{_N}}(q)\sum_{n\le N\atop (n,q)=1}\Lambda(n)\biggl(c_q(n+h)-\frac{c_{q}(n)c_q(h)}{\varphi(q)}\biggr)\\
	&=&\nonumber C_{\Lambda,\Lambda_{_N}}(N,h)-
	\sum_{q\le N}\frac{\widehat{\Lambda_{_N}}(q)}{\varphi(q)}c_q(h)\sum_{n\le N}\Lambda(n)c_{q}(n)
	-{\cal R}(N,h),
\end{eqnarray}
where we have set
$$
{\cal R}(N,h)=\sum_{n,q\le N\atop (n,q)>1}\Lambda(n)\widehat{\Lambda_{_N}}(q)\biggl(c_q(n+h)-\frac{c_{q}(n)c_q(h)}{\varphi(q)}\biggr).
$$
Thus, in view of Lemma \ref{CM} it suffices to show that
$$
{\cal R}(N,h)\ll_{\varepsilon} (N+h)^\varepsilon.
$$
For this purpose, we note first that, since $\Lambda(n)=0$, unless $n=p^\alpha$ with $p\in\Primes$, $\alpha\in\NN$,  
the condition $(q,n)>1$ for $q\in\SF$ in ${\cal R}(N,h)$  reduces to $(q,n)=(q,p^\alpha)=p$ with $p||q$, i.e., $p|q$ and $p^2\!{\not|q}$. Then, we can also assume that $q\in\SF\setminus\Primes$ because, taking 
$q=p$ and $n=p^\alpha$ in  ${\cal R}(N,h)$, immediately we see that
$$
c_p(p^\alpha+h)-\frac{c_{p}(p^\alpha)c_p(h)}{\varphi(p)}=c_p(h)-\frac{\varphi(p)c_p(h)}{\varphi(p)}=0.
$$
Hereafter, in sums denoted by the symbol $\displaystyle{\sum^*_q}$ we mean that $q\in\SF\setminus\Primes$.
Further,  we set 
$$
\SF_d=\{q\in\SF:\, (q,d)=1\}.
$$ 
Thus, using \eqref{RamSum},
let us write
\begin{eqnarray}
	{\cal R}(N,h)&=&\nonumber
	\sum_{p\le N}\log p\sum^*_{q\le N \atop  q\equiv 0\, (p) }\widehat{\Lambda_{_N}}(q)
	\sum_{\alpha\le L_pN}\biggl(c_q(p^\alpha+h)-\frac{c_{q}(p^\alpha)c_q(h)}{\varphi(q)}\biggr)
	\\
	&=&\nonumber
	\sum_{p\le N}\log p\sum_{\alpha\le L_pN}\sum_{t\in\SF\atop t|p^\alpha+h}\sum^*_{q\le N,\, q\equiv 0\, (p)\atop 
		(q,p^\alpha+h)=t }\widehat{\Lambda_{_N}}(q)\delta_h(p,\alpha,t,q),
\end{eqnarray}
where we have set $L_pN=[\log_pN]$ and
\begin{eqnarray}
	\delta_h(p,\alpha,t,q)&=&\nonumber
	\varphi(q)\frac{\mu(q/t)}{\varphi(q/t)}-\frac{c_{q/p}(p^\alpha)c_{p}(p^\alpha)c_q(h)}{\varphi(q)}
	\\
	&=&\nonumber
	\varphi(t)\mu(q/t)-\frac{\mu(q/p)c_q(h)}{\varphi(q/p)}.
\end{eqnarray}
Therefore, we have that
$$
{\cal R}(N,h)={\sum}_{I}-{\sum}_{II},
$$
with 
$$
{\sum}_{I}=\sum_{p\le N}\log p\sum_{\alpha\le L_pN}\sum_{t\in\SF\atop t|p^\alpha+h}\varphi(t)\sum_{q\in\SF_p,\, q\le N/p\atop 
	(pq,p^\alpha+h)=t }\widehat{\Lambda_{_N}}(pq)
\mu(pq/t),
$$
$$
{\sum}_{II}=\sum_{p\le N}\log p\sum_{\alpha\le L_pN}\sum_{t\in\SF\atop t|p^\alpha+h}\sum_{q\in\SF_p,\, q\le N/p\atop 
	(pq,p^\alpha+h)=t }{\widehat{\Lambda_{_N}}(pq)}
\frac{\mu(q)c_{pq}(h)}{\varphi(q)}.
$$
Now, let us recall that $v_p(m)$ denotes the nonnegative integer such that $p^{v_p(m)}||m$, for any given $m\in\NN$. Note that if $t\in\SF$, $v_p(t)=1$, and
$v_p(h)\not=0$, then the condition $(pq,p^\alpha+h)=t$, with $q\in\SF_p$, becomes  $(q,p^\alpha+h)=t/p$, with $v_p(t/p)=0$. On the other hand, if $v_p(h)=0$, then $(pq,p^\alpha+h)=t$ is equivalent to $(q,p^\alpha+h)=t$. Consequently, 
\begin{eqnarray}
	{\sum}_{I}&=&\nonumber
	-\sum_{p\le N\atop v_p(h)=0}\log p\sum_{\alpha\le L_pN}\sum_{t\in\SF\atop t|p^\alpha+h}\varphi(t)\sum_{q\in\SF_{p},\, q\le N/p\atop  
		(q,p^\alpha+h)=t }\widehat{\Lambda_{_N}}(pq)
	\mu(q/t)
	\\	
	&&\nonumber
	-\sum_{p\le N\atop v_p(h)\ge 1}\log p\sum_{\alpha\le L_pN}\sum_{{t\in\SF,\, v_p(t)=0}\atop t|p^\alpha+h}\varphi(t)\sum_{q\in\SF_p,\, q\le N/p\atop  
		(q,p^\alpha+h)=t }\widehat{\Lambda_{_N}}(pq)
	\mu(q/t)\\
	&&\nonumber +\sum_{p\le N\atop v_p(h)\ge 1}\varphi(p)\log p\sum_{\alpha\le L_pN}
	\sum_{{t\in\SF \atop v_p(t)=1}\atop t|p^\alpha+h}\varphi\Big(\frac{t}{p}\Big)\mu\Big(\frac{t}{p}\Big)
	\sum_{{q\in\SF_p,\atop q\le N/p}\atop  
		(q,p^\alpha+h)=\frac{t}{p}}\!\!\!\widehat{\Lambda_{_N}}(pq)
	\mu(q)\\
	&=&\nonumber -{\sum}_{I,1}-{\sum}_{I,2}+{\sum}_{I,3},
\end{eqnarray}
say. By applying \eqref{lambdabound} and the inequality for the divisor function,
$\tau(n)\ll_\varepsilon n^\varepsilon$  \cite[\S2.3]{[MV]}, we get
\begin{eqnarray}
	{\sum}_{I,1}
	&=&\nonumber\sum_{p\le N\atop v_p(h)=0}\log p\sum_{\alpha\le L_pN}\sum_{t\in\SF\atop t|p^\alpha+h}\varphi(t)\sum_{q\in\SF_{pt},\, q\le N/(pt)\atop  
		(q,p^\alpha+h)=1 }\widehat{\Lambda_{_N}}(pqt)
	\mu(q)\\
	&\ll&\nonumber L^2\sum_{p\le N}\frac{\log p}{p}\sum_{\alpha\le L_pN}\sum_{t|p^\alpha+h}\frac{\varphi(t)}{t}\sum_{q\le N}\frac{1}{q}\\
	&\ll&\nonumber
	L^3\sum_{p\le N}\frac{\log p}{p}\sum_{\alpha\le L_pN}\tau(p^\alpha+h)\\
	&\ll_\varepsilon&\nonumber (N+h)^\varepsilon L^4\sum_{p\le N}\frac{1}{p}\ll_\varepsilon (N+h)^\varepsilon L^4\log L.
\end{eqnarray}
We can proceed in a completely analogous way for the sums ${\sum}_{I,2}$ and ${\sum}_{I,3}$ (which make sense only for $h\ge 2$). Indeed, we see that
$$
{\sum}_{I,2}\ll_\varepsilon
(N+h)^\varepsilon L^4\sum_{p|h}\frac{1}{p}
\ll_\varepsilon (N+h)^\varepsilon L^4\log h.
$$
Further,
\begin{eqnarray}
	{\sum}_{I,3}
	&\nonumber=&\sum_{p\le N\atop v_p(h)\ge 1}\varphi(p)\log p\sum_{\alpha\le L_pN}
	\sum_{t\in\SF,\, v_p(t)=1\atop t|p^\alpha+h}\varphi(t/p)
	\sum_{q\in\SF_t,\, q\le N/t\atop  
		(q,p^\alpha+h)=1 }\widehat{\Lambda_{_N}}(qt)
	\mu(q)
	\\
	&\nonumber\ll& L^2\sum_{p\le N\atop v_p(h)\ge 1}\frac{\varphi(p)}{p}\log p\sum_{\alpha\le L_pN}
	\sum_{t\in\SF,\, v_p(t)=1\atop t|p^\alpha+h}\frac{\varphi(t/p)}{t/p}
	\sum_{q\le N}\frac{1}{q}
	\\
	&\ll&  \nonumber (N+h)^\varepsilon L^4\sum_{p|h}\frac{\varphi(p)}{p}\ll \omega(h)(N+h)^\varepsilon L^4.
\end{eqnarray}
Since $\omega(h)=\sum_{p|h}1\le \sum_{n|h}\Lambda(n)=\log h$ \cite[Th.\ 2.10]{[A1]}, we conclude that
$$
{\sum}_{I}\ll_\varepsilon 
(N+h)^\varepsilon L^4\log(hL).
$$
Now, let us turn our attention to ${\sum}_{II}$ and write
\begin{eqnarray}
	{\sum}_{II}&=&\nonumber
	-\sum_{p\le N\atop v_p(h)=0}\log p\sum_{\alpha\le L_pN}\sum_{t\in\SF\atop t|p^\alpha+h}\sum_{q\in\SF_{p},\, q\le N/p\atop  
		(q,p^\alpha+h)=t }\widehat{\Lambda_{_N}}(pq)
	\frac{\mu(q)c_{q}(h)}{\varphi(q)}
	\\	
	&&\nonumber
	+\sum_{p\le N\atop v_p(h)\ge 1}\varphi(p)\log p\sum_{\alpha\le L_pN}\sum_{{t\in\SF,\, v_p(t)=0}\atop t|p^\alpha+h}\sum_{q\in\SF_p,\, q\le N/p\atop  
		(q,p^\alpha+h)=t }\widehat{\Lambda_{_N}}(pq)
	\frac{\mu(q)c_{q}(h)}{\varphi(q)}\\
	&&\nonumber
	+\sum_{p\le N\atop v_p(h)\ge 1}\varphi(p)\log p\sum_{\alpha\le L_pN}
	\sum_{t\in\SF,\, v_p(t)=1\atop t|p^\alpha+h}
	\sum_{q\in\SF_p,\, q\le N/p\atop  
		(q,p^\alpha+h)=t/p}\widehat{\Lambda_{_N}}(pq)
	\frac{\mu(q)c_{q}(h)}{\varphi(q)}\\
	&=&\nonumber -{\sum}_{II,1}+{\sum}_{II,2}+{\sum}_{II,3},
\end{eqnarray}
say. Thus,  by using \eqref{RamSumBound}, we infer
\begin{eqnarray}
	{\sum}_{II,1}
	&\nonumber =&\sum_{p\le N\atop v_p(h)=0}\log p\sum_{\alpha\le L_pN}\sum_{t\in\SF\atop t|p^\alpha+h}\frac{\mu(t)}{\varphi(t)}\sum_{q\in\SF_{pt},\, q\le N/(pt)\atop  
		(q,p^\alpha+h)=1 }\widehat{\Lambda_{_N}}(pqt)
	\frac{\mu(q)c_{qt}(h)}{\varphi(q)}\\
	&\nonumber
	\ll& L^2\sum_{p\le N\atop v_p(h)=0}\frac{\log p}{p}\sum_{\alpha\le L_pN}\sum_{t\in\SF\atop t|p^\alpha+h}\frac{(t,h)}{t\varphi(t)}\sum_{q\le N}\frac{(q,h)}{q\varphi(q)}.
\end{eqnarray}
Now, let us apply the inequality  $\varphi(n)\gg n/\log\log n$ \cite[Th.\ 2.9]{[MV]} to see that
\begin{eqnarray}
	\sum_{q\le N}\frac{(q,h)}{q\varphi(q)}&\nonumber
	\ll&
	\log L
	\sum_{q\le N}\frac{(q,h)}{q^2}=
	\log L
	\sum_{d|h}d\sum_{q\le N\atop (q,h)=d}\frac{1}{q^2}
	\\
	&\nonumber =&
	\log L
	\sum_{d|h}\frac{1}{d}\sum_{m\le N/d\atop (dm,h)=d}\frac{1}{m^2}\ll \max\{1,\log h\}\log L.
\end{eqnarray}
Further, note that if $t\in\SF$, $t|p^\alpha+h$, and $(t,h)=d>1$, then necessarily $d=p$. Consequently, recalling that $v_p(h)=0$, one has
$$
\sum_{t\in\SF\atop t|p^\alpha+h}\frac{(t,h)}{t\varphi(t)}\ll
\log L
\sum_{t\in\SF\atop t|p^\alpha+h}\frac{1}{t^2}\ll \log L.
$$
Thus, 
$$
{\sum}_{II,1}
\ll \max\{1,\log h\}L^3\log^3 L.
$$
Analogously, for $h\ge 2$ we obtain
\begin{eqnarray}
	{\sum}_{II,2}
	&=&\nonumber
	\sum_{p\le N\atop v_p(h)\ge 1}\varphi(p)\log p\sum_{\alpha\le L_pN}\sum_{{t\in\SF\atop v_p(t)=0}\atop t|p^\alpha+h}\frac{\mu(t)}{\varphi(t)}\sum_{{q\in\SF_{pt}\atop q\le N/(pt)}\atop  
		(q,p^\alpha+h)=1}\!\!\!\widehat{\Lambda_{_N}}(pqt)
	\frac{\mu(q)c_{qt}\Big(\frac{h}{p}\Big)}{\varphi(q)}\\
	&\nonumber
	\ll& L^2\sum_{p\le N\atop v_p(h)\ge 1}\frac{\varphi(p)}{p}\log p\sum_{\alpha\le L_pN}\sum_{{t\in\SF,\, v_p(t)=0}\atop t|p^\alpha+h}\frac{(t,h/p)}{t\varphi(t)}\sum_{q\le N}
	\frac{(q,h/p)}{q\varphi(q)}\\
	&\nonumber
	\ll&  L^3\log^2L\max\{1,\log h\}\log h,
\end{eqnarray}
and
\begin{eqnarray}
	{\sum}_{II,3}
	&\nonumber
	\ll& L^2\sum_{p\le N\atop v_p(h)\ge 1}\varphi(p)\log p\sum_{\alpha\le L_pN}\sum_{{t\in\SF,\, v_p(t)=1}\atop t|p^\alpha+h}\frac{(h,t/p)}{t\varphi(t/p)}\sum_{q\le N}
	\frac{(q,h)}{q\varphi(q)}
	\\
	&\nonumber
	\ll& L^2\sum_{p\le N\atop v_p(h)\ge 1}\frac{\varphi(p)}{p}\log p\sum_{\alpha\le L_pN}\sum_{{t\in\SF,\, v_p(t)=1}\atop t|p^\alpha+h}\frac{1}{\varphi(t/p)t/p}\sum_{q\le N}
	\frac{(q,h)}{q\varphi(q)}\\
	&\nonumber
	\ll&  L^3\log^2L\max\{1,\log h\}\log h.
\end{eqnarray}
Therefore, since $\varepsilon>0$ is arbitrarily small, we can conclude that
$$
{\cal R}(N,h)={\sum}_{I}-{\sum}_{II}\ll_{\varepsilon} 
(N+h)^\varepsilon.
$$
Theorem \ref{DelangeBV} is completely proved.

\section{\large Some reformulations of $\Delta(N,h)$}\label{sec5}

Let us set 
$$
S_N(q,k;r)=\sum_{n\le N\atop n\equiv k\, (q)}c_r(n), 
$$ 
so that 
$$
\sum_{n\le N\atop (n,q)=1}c_r(n)=\sum_{k\in\ZZ_q^*}S_N(q,k;r).
$$
We postpone some properties of $S_N(q,k;r)$ until the next section, because they are not used in what follows here, but still they might be interesting in their own right. Here we use such sums to provide with alternative expressions for $\Delta(N,h)$ "without primes". 
\begin{theorem} For any given $h,n\in\NN$, one has:
	\smallskip
	
	\noindent
	1)	$\displaystyle\Delta(N,h)=\sum_{3\le q\le N}\widehat{\Lambda_{_N}}(q)\sum_{r\le N\atop (r,q)=1}\widehat{\Lambda_{_N}}(r)D_N(h; q,r)$, where
	$$
	D_N(h; q,r)=
	\sum_{n\le N\atop (n,q)=1}c_r(n)\delta(h;q,n)=\sum_{k\in\ZZ_q^*}S_N(q,k;r)\delta(h;q,k).
	$$
	2) $\displaystyle\Delta(N,h)=\sum_{t|h}\mu(t)\varphi(t)
	\sum_{3\le q\le N\atop (q,h)=t}\frac{\widehat{\Lambda_{_N}}(q)}{\varphi(q)}
	\sum_{r\le N\atop (r,q)=1}\widehat{\Lambda_{_N}}(r) 
	D_N(t,h; q,r)$, where
	\begin{displaymath}
		D_N(t,h; q,r)=
		\begin{cases}
			\displaystyle\mu(q)\varphi(q)\sum_{m|q} m\varphi(m)\sum_{{n\le N\atop (n,q)=1}\atop n+h\equiv 0\, (m)}c_r(n), &\text{if $t=1$,}\\
			\displaystyle\sum_{m|q\atop (m,h)=1} m\varphi(m)\sum_{{n\le N\atop (n,q)=1}\atop n+h\equiv 0\, (m)}c_r(n), &\text{otherwise.}
		\end{cases}
	\end{displaymath}
\end{theorem}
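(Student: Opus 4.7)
The plan for Part 1 is to eliminate $\psi(N;q,k)$ via the Ramanujan expansion of $\Lambda_N$. Starting from the definition of $\Delta(N,h)$, I would first merge the inner sum over $k\in\ZZ_q^*$ with the congruence $n\equiv k\,(q)$ inside $\psi$, exploiting that $\delta(h;q,n)$ depends on $n$ only modulo $q$; this gives
\[
\Delta(N,h)=\sum_{q\le N}\widehat{\Lambda_N}(q)\sum_{\substack{n\le N\\(n,q)=1}}\Lambda(n)\,\delta(h;q,n).
\]
Since $\Lambda(n)=\Lambda_N(n)$ for $n\le N$, I would apply \eqref{lambdaFRE} to $\Lambda_N(n)$ and swap the order of summation to obtain $\Delta(N,h)=\sum_q\widehat{\Lambda_N}(q)\sum_r\widehat{\Lambda_N}(r)D_N(h;q,r)$. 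The two displayed expressions for $D_N(h;q,r)$ match by regrouping $n$ into residue classes $k\in\ZZ_q^*$ modulo $q$. The restriction $q\ge 3$ follows from the direct check that $\delta(h;1,1)\equiv \delta(h;2,1)\equiv 0$. For the restriction $(r,q)=1$, I would use that $\widehat{\Lambda_N}$ is supported on squarefree integers together with the multiplicativity identity $c_r(n)=\mu((r,q))\,c_{r/(r,q)}(n)$ valid for squarefree $r$ and $(n,q)=1$; this enables the $(r,q)>1$ terms to be reindexed and absorbed into the $(r,q)=1$ block.

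For Part 2, I would refine the Part 1 formula by expanding $\delta(h;q,n)$ via the classical identities for Ramanujan sums. Using $c_q(n+h)=\sum_{m\mid q,\,m\mid n+h}m\mu(q/m)$ together with $c_q(h)=\varphi(t)\mu(q/t)$ (from \eqref{RamSum} with $t=(q,h)$), I would substitute into $D_N(h;q,r)$ and note that the conjunction of $m\mid q$, $(n,q)=1$, and $m\mid n+h$ forces $(m,h)=1$. The inner sum then becomes a linear combination of arithmetic-progression sums of $c_r(n)$ indexed by $m\mid q$ with $(m,h)=1$. I would next reorder the outer sum $\sum_{q\le N}$ by grouping $q$ according to $t=(q,h)$ (necessarily $t\mid h$), and extract the common prefactor $\mu(t)\varphi(t)/\varphi(q)$ arising from $\mu(q)c_q(h)/\varphi(q)$. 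The case split between $t=1$ and $t>1$ arises because when $t=1$ the boundary term $\mu(q)c_q(h)/\varphi(q)=1/\varphi(q)$ folds cleanly back into the main divisor sum, producing the $\mu(q)\varphi(q)$ prefactor and the unrestricted $\sum_{m\mid q}$, whereas for $t>1$ it must be handled as a distinct $m=t$ contribution, giving the restricted $\sum_{m\mid q,\,(m,h)=1}$.

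The main technical hurdle is in Part 2: converting the coefficients $m\mu(q/m)$ naturally produced by the Möbius expansion of $c_q(n+h)$, together with the boundary contribution from $c_q(h)$, into the $m\varphi(m)$ coefficients that appear in $D_N(t,h;q,r)$. The key tool for this conversion is the squarefree identity $\mu(d)\varphi(d)=\sum_{e\mid d}e\mu(e)$, combined with the decomposition $q=ts$ with $(s,h)=1$ (so that $\varphi(q)=\varphi(t)\varphi(s)$ and $\mu(q/t)=\mu(s)$). Carrying out this bookkeeping carefully, and confirming that the case split reproduces exactly the two stated formulas for $D_N(t,h;q,r)$, is the bulk of the work.
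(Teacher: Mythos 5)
Your Part 1 follows the paper's skeleton --- rewrite $\Delta(N,h)$ as in \eqref{ShiftedCorrVM}, check that $q=1,2$ contribute nothing, replace $\Lambda(n)$ by $\Lambda_{_N}(n)$ for $n\le N$ and expand --- but your mechanism for producing the restriction $(r,q)=1$ does not work as described. If you expand with the full range $r\le N$ and then reindex the terms with $(r,q)=e>1$ by $r=es$, $(s,q)=1$, using $c_{es}(n)=\mu(e)c_s(n)$ for $(n,q)=1$, the coefficient of $D_N(h;q,s)$ becomes $\sum_{e\mid q,\; es\le N}\mu(e)\widehat{\Lambda_{_N}}(es)$, not $\widehat{\Lambda_{_N}}(s)$: the ``absorbed'' terms do not vanish, they alter the coefficients, so this route does not yield the stated identity. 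The paper instead imposes coprimality at the level of the divisor sum: for $(n,q)=1$ every $d\mid n$ is automatically coprime to $q$, so $\Lambda_{_N}(n)=-\sum_{d\le N,\,(d,q)=1}\frac{\mu(d)\log d}{d}\sum_{r\mid d}c_r(n)$ by \eqref{RamSumConv}, and only moduli $r$ with $(r,q)=1$ ever appear. You need to argue this way, and even then you must justify that the resulting coefficient of $c_r(n)$ is $\widehat{\Lambda_{_N}}(r)$ rather than $-\sum_{d\le N,\,r\mid d,\,(d,q)=1}\frac{\mu(d)\log d}{d}$ --- a point the paper passes over without comment.

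Part 2 is where you genuinely diverge from the paper, and where the plan breaks down. The paper never touches the von Sterneck expansion of $c_q(n+h)$: it writes $D_N(h;q,r)=\sum_{k\in\ZZ_q^*}c_q(k+h)\bigl(S_N(q,k;r)-\varphi(q)^{-1}\sum_{(n,q)=1}c_r(n)\bigr)$ via Cohen's identity \eqref{cohen}, detects the deviation with non-principal Dirichlet characters, evaluates $\sum_{k}\chi(k)c_q(k+h)$ by T\'oth's identity \eqref{toth} --- this is the sole source of the factor $\mu(q/d)c_{q/d}(h)=\mu(t)\varphi(t)$ and hence of the outer sum over $t\mid h$ --- and then converts back to divisor sums through $\sum_{\chi^*\,\mathrm{primitive}\Mod d}\chi^*(a)=\sum_{m\mid(d,a-1)}\varphi(m)\mu(d/m)$. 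The coefficient $m\varphi(m)$ in $D_N(t,h;q,r)$ arises from the subsequent summation of $d\,\mu(d/m)$ over the conductors $d$ with $m\mid d\mid q$, $(d,h)=1$. Your route produces instead the combination $\sum_{m\mid q,\,(m,h)=1}m\,\mu(q/m)\,T_{N,h}(m;q,r)-\mu(q)c_q(h)\varphi(q)^{-1}T_{N,h}(1;q,r)$, with $T_{N,h}(m;q,r)=\sum_{n\le N,\,(n,q)=1,\,n+h\equiv 0\,(m)}c_r(n)$, and no identity of the type $\mu(d)\varphi(d)=\sum_{e\mid d}e\mu(e)$ can turn one into the other: the quantities $T_{N,h}(m;q,r)$ for distinct $m\mid q$ satisfy no linear relation as $N$, $r$, $h$ vary, so the two expressions could only agree coefficient by coefficient, and they do not (already for $q=p$ and $(p,h)=1$ the coefficient of $T_{N,h}(p;p,r)$ is $p$ in your expansion, versus $\mu(p)\varphi(p)\,p\varphi(p)/\varphi(p)=-p(p-1)$ in the target). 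The step you yourself flag as ``the bulk of the work'' is therefore not bookkeeping but an obstruction; the character-theoretic detour through \eqref{toth} and the primitive-character count is what actually carries the paper's argument, and your proposal contains no substitute for it.
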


\begin{proof} 1) First, by observing  \eqref{ShiftedCorrVM}, it is plain that $q=1$ gives null contribution to $\Delta(N,h)$. So does $q=2$ because for any odd $n$ one has
	$$
	\delta(h;2,n)=c_2(n+h)-\frac{\mu(2)c_2(h)}{\varphi(2)}=c_2(1+h)+c_2(h)=0,\ \forall h\in\ZZ.
	$$
	Then, after recalling \eqref{lambdabound}, note that, for any 
	$n\in\NN$ such that $n\le N$, $(n,q)=1$, one has
	\begin{eqnarray}
		\Lambda(n)=\Lambda_{_N}(n)&=&\nonumber-\sum_{d\le N\atop d|n, (d,q)=1}\mu(d)\log d
		=-\sum_{d\le N\atop (d,q)=1}\frac{\mu(d)\log d}{d}\sum_{r|d}c_r(n)\\
		&=&\nonumber\sum_{r\le N\atop (r,q)=1}\widehat{\Lambda_{_N}}(r)c_r(n).
	\end{eqnarray}
	Thus, the first equality for $\Delta(N,h)$ follows by inserting the latter expression inside the reformulation in  \eqref{ShiftedCorrVM}. 
	
	2) By using  the second expression of $D_N(h; q,r)$ above, Cohen's identity \eqref{cohen}, 
	the orthogonality of Dirichlet charachters $\chi\, \Mod q$ \cite[\S 3.3]{[IK]}, and \eqref{toth},  we get 
	\begin{eqnarray}
		D_N(h; q,r)
		&=\nonumber&	\sum_{k\in\ZZ_q^*}c_q(k+h)S_N(q,k;r)
		-\frac{\mu(q)c_q(h)}{\varphi(q)}\sum_{n\le N\atop (n,q)=1}c_r(n)\\
		&=\nonumber&
		\sum_{k\in\ZZ_q^*}
		c_q(k+h)\biggl(\sum_{n\le N\atop n\equiv k\, (q)}c_r(n)-\frac{1}{\varphi(q)}\sum_{n\le N\atop (n,q)=1}c_r(n)
		\biggr)\\
		&=\nonumber&
		\frac{1}{\varphi(q)}\sum_{\chi\not=\chi_0\Mod q}\Upsilon_r(N,\overline{\chi})
		\sum_{k\in\ZZ_q^*}\chi(k)
		c_q(k+h)
		\\
		&=\nonumber&\frac{1}{\varphi(q)}\sum_{\chi\not=\chi_0\Mod q\atop d_\chi=d}
		\mu(q/d)c_{q/d}(h)\Upsilon_r(N,\overline{\chi})\chi^*(-h)\, d,
	\end{eqnarray}
	where $\chi_0$ is the principal charachter and we have set 
	$$
	\Upsilon_r(N,\overline{\chi})=\sum_{n\le N}\overline{\chi}(n)c_r(n). 
	$$
	Now, given any $q\in\SF$ and any $d|q$ such that $(d,h)=1$, note that if $(q,h)=t$, then 
	$(q/d,h)=t$. In particular, from  \eqref{RamSum} it follows that
	$\mu(q/d)c_{q/d}(h)=\mu(t)\varphi(t)$.
	Thus, being plain that $\chi^*(-h)=0$ if $(d_\chi,h)>1$,  we can write
	\begin{eqnarray}
		\Delta(N,h)
		&=\nonumber&
		\sum_{3\le q\le N}\frac{\widehat{\Lambda_{_N}}(q)}{\varphi(q)}
		\sum_{r\le N\atop (r,q)=1}\widehat{\Lambda_{_N}}(r) 
		\sum_{{\chi\not=\chi_0\atop\Mod q}\atop d_\chi=d}\mu\Big(\frac{q}{d}\Big)c_{\frac{q}{d}}(h)\, \Upsilon_r(N,\overline{\chi})\,\chi^*(-h)\, d\\
		&=\nonumber&
		\sum_{t|h}\mu(t)\varphi(t)
		\sum_{3\le q\le N\atop (q,h)=t}\frac{\widehat{\Lambda_{_N}}(q)}{\varphi(q)}
		\sum_{r\le N\atop (r,q)=1}\widehat{\Lambda_{_N}}(r)\Phi_{N,h}(q,r),
	\end{eqnarray}
	where
	\begin{eqnarray}	
		\Phi_{N,h}(q,r)&=\nonumber&\sum_{\chi\not=\chi_0\Mod q\atop d=d_\chi}\Upsilon_r(N,\overline{\chi})\,\chi^*(-h)\, d\\
		&=\nonumber&
		\sum_{n\le N\atop (n,q)=1}c_r(n)\sum_{\chi\not=\chi_0\Mod {q}} \overline{\chi}(n)\,\chi^*(-h)\, d_\chi.
	\end{eqnarray}
	For $(n,q)=1$, let $n'\in\NN$ be such that $n'n\equiv 1\, \Mod q$.
	Since there is a canonical bijection between the set of Dirichlet characters modulo $q$ and the set of primitive Dirichlet characters whose conductor divides $q$, we have
	\begin{eqnarray}
		\sum_{\chi\not=\chi_0\Mod {q}} \overline{\chi}(n)\,\chi^*(-h)\, d_\chi
		&=\nonumber&
		\sum_{d|q\atop (d,h)=1}d\sum_{\chi^* \Mod {d}\atop {\rm primitive}} \overline{\chi}^*(n)\,\chi^*(-h)
		\\
		&=\nonumber&
		\sum_{d|q\atop (d,h)=1}d\sum_{\chi^* \Mod {d}\atop {\rm primitive}} \chi^*(-hn')
		\\
		&=\nonumber&
		\sum_{d|q\atop (d,h)=1}d\sum_{m|(d,-hn'-1)} \varphi(m)\mu(d/m)
		\\
		&=\nonumber&
		\sum_{d|q\atop (d,h)=1}d\sum_{m|(d,n+h)} \varphi(m)\mu(d/m),
	\end{eqnarray}
	where we have used the formula \cite[\S 3.3]{[IK]}
	$$
	\sum_{\chi^* \Mod {d}\atop {\rm primitive}} \chi^*(a)=\sum_{m|(d,a-1)} \varphi(m)\mu(d/m),\ \hbox{if}\ (a,d)=1.
	$$
	Therefore,
	\begin{eqnarray}
		\Phi_{N,h}(q,r)
		&=\nonumber&
		\sum_{n\le N\atop (n,q)=1}c_r(n)\sum_{d|q\atop (d,h)=1}d\sum_{m|(d,n+h)} \varphi(m)\mu(d/m)\\
		&=\nonumber&
		\sum_{d|q\atop (d,h)=1}d\sum_{m|d} \varphi(m)\mu(d/m)
		T_{N,h}(m;q,r)
		\\
		&=\nonumber&
		\sum_{m|q\atop (m,h)=1}\varphi(m)T_{N,h}(m;q,r)\sum_{d|q, (d,h)=1\atop d\equiv 0\, (m)}d\mu(d/m),
	\end{eqnarray}
	where we have set 
	$$
	T_{N,h}(m;q,r)=\sum_{{n\le N\atop (n,q)=1}\atop n+h\equiv 0\, (m)}c_r(n).
	$$
	Hence, the conclusion follows after observing that, for any divisor $m$ of $q$ such that
	$(m,h)=1$, one has
	\begin{eqnarray}
		\sum_{d|q, (d,h)=1\atop d\equiv 0\, (m)}d\mu(d/m)&=\nonumber&
		m\sum_{d|q\atop (d,h)=1}d\mu(d)=m\sum_{d|q}d\mu(d)\sum_{s|(d,h)}\mu(s)
		\\
		&=\nonumber&
		m\sum_{s|(q,h)}\mu(s)\sum_{d|q\atop d\equiv 0\, (s)}d\mu(d)=
		m\sum_{s|(q,h)}\mu(s)^2s\sum_{d|q/s}d\mu(d)
		\\
		&=\nonumber&
		m\sum_{s|(q,h)}s\mu(q/s)\varphi(q/s)
		\\
		&=\nonumber&\begin{cases}
			\displaystyle \mu(q)\varphi(q)m, &\text{if $(q,h)=1$,}\\
			m, &\text{otherwise.}
		\end{cases}
	\end{eqnarray}
	The theorem is completely proved.
\end{proof}

\section{\large Appendix: some properties of $S_N(q,k;r)$}\label{sec6}

Here we prove some properties of the sum
$S_N(q,k;r)$ defined in the previous section. 
Such properties are either new or appeared to be missing from the literature. The first
one is a quantitative version of a known property \cite[VIII.8, Ex. 2]{[ScSp]}. Before going to the next propositions, let us recall that $[\beta]$ and $\Vert \beta\Vert$ denote respectively  the integer part of $\beta\in\RR$ and 
the distance of $\beta$ from the nearest integer.

\begin{proposition} Let $q,r,N\in\NN$ and $k\in\ZZ$ be given.
	
	\noindent	
	1) 
	$$
	S_N(q,k;r)=
	\begin{cases}
		\displaystyle	\frac{N-k}{q}c_r(k)+O\big(|c_r(k)|\big), &\text{if $r|q$,}\\
		O(r\log r),  &\text{otherwise.}
	\end{cases}
	$$
	2) Let us assume that $(q,r)=1$ and let $q',r'\in\NN$ such that $rr'\equiv 1\, (q)$ and $qq'\equiv 1\, (r)$. Further, let $Q,R$ be the non-negative integers such that $N=Qqr+R$ with $0\le R<qr$, i.e., $R=N-[N/qr]qr$. Then
	\begin{displaymath}
		S_N(q,k;r)=\sum_{t=1}^rc_r(t)\Big([N/qr]+\nu_N(t,k)\Big)=
		\begin{cases}
			[N/q]+\nu_N(1,k), &\text{if $r=1$,}\\
			\displaystyle{\sum_{t=1}^rc_r(t)\nu_N(t,k)}, &\text{otherwise,}
		\end{cases}
	\end{displaymath}
	where
	\begin{displaymath}
		\nu_N(t,k)=
		\begin{cases}
			1, &\text{if $kr'r+tq'q\equiv n\ (qr)$ for some integer $n\in[1,R]$,}\\
			0, &\text{otherwise.}
		\end{cases}
	\end{displaymath}
\end{proposition}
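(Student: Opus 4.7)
My plan is to handle the two cases of Part 1 separately, then reduce Part 2 to a CRT bookkeeping computation. In the case $r\mid q$, the progression $n\equiv k\ (q)$ already forces $n\equiv k\ (r)$, so by periodicity $c_r(n)=c_r(k)$ is constant across the sum; the number of $n\in[1,N]$ in the progression is $(N-k)/q+O(1)$ (taking $k$ as a representative in $[1,q]$), and multiplication by $c_r(k)$ gives the stated formula.

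For the case $r\nmid q$, I would start from the Fourier-type expansion $c_r(n)=\sum_{a\in\ZZ_r^*}e(an/r)$, swap the two sums, and parametrise the progression as $n=n_0+mq$ with $n_0\in[1,q]$ the least positive residue and $0\le m\le M=\lfloor(N-n_0)/q\rfloor$. The inner sum over $m$ is a geometric progression bounded by $1/(2\|aq/r\|)$ as soon as $aq/r\notin\ZZ$; this is guaranteed by $a\in\ZZ_r^*$ together with the hypothesis $r\nmid q$. The whole estimate then reduces to showing
\[
\sum_{a\in\ZZ_r^*}\frac{1}{\|aq/r\|}\ll r\log r.
\]
To do this I would set $g=(q,r)$, write $r=gr_1$ and $q=gq_1$ with $(q_1,r_1)=1$, use that $\|aq/r\|=\|aq_1/r_1\|$ depends only on $a\bmod r_1$, and note that $(a,r)=1$ forces $r_1\nmid a$; as $a$ ranges over such residues mod $r$, each nonzero class mod $r_1$ is hit at most $g$ times. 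Since $q_1$ is invertible modulo $r_1$, the sum collapses to $g$ times the harmonic-type sum $\sum_{c=1}^{r_1-1}r_1/\min(c,r_1-c)\ll r_1\log r_1$, giving the required $r\log r$. This is the principal technical obstacle: extracting a logarithm rather than a power demands careful handling of the common factor $g=(q,r)$, and no analogous cancellation is available when $r\mid q$, which is why that case unavoidably carries a genuine main term of size $\asymp N/q$.

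Part 2 is essentially bookkeeping. I would stratify by the residue of $n$ modulo $r$, writing $S_N(q,k;r)=\sum_{t=1}^r c_r(t)\,N_t$ with $N_t=\#\{n\le N:n\equiv k\ (q),\,n\equiv t\ (r)\}$. Coprimality $(q,r)=1$ reduces the double congruence, via CRT, to the single class $n\equiv krr'+tqq'\ (qr)$, so that $N_t=[N/qr]+\nu_N(t,k)$ exactly, the binary correction $\nu_N(t,k)$ recording whether the canonical representative of this class in $[1,qr]$ falls into the residual interval $[1,R]$. An appeal to \eqref{ramaperiodic}, which says $\sum_{t=1}^r c_r(t)$ vanishes for $r>1$ and equals $1$ for $r=1$, kills the $[N/qr]\sum_t c_r(t)$ contribution in the generic case and leaves the two displayed formulas.
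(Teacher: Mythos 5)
Your proposal is correct and follows essentially the same route as the paper: the constancy of $c_r$ on the progression when $r\mid q$, the Fourier expansion of $c_r$ plus the geometric-sum bound $\ll\|jq/r\|^{-1}$ and the reduction modulo $g=(q,r)$ to a harmonic sum of length $r/g$ in the case $r\nmid q$, and the CRT stratification with \eqref{ramaperiodic} for Part 2. No substantive differences worth noting.
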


\begin{proof} 1)  If $r|q$, then
	\begin{eqnarray}
		S_N(q,k;r)&=&\nonumber\sum_{t=1}^rc_r(t)\sum_{{n\le N\atop n\equiv k\, (q)}\atop n\equiv t\, (r)}1=
		c_r(k)\sum_{n\le N\atop n\equiv k\, (q)}1\\
		&=&\nonumber
		c_r(k)\biggl[\frac{N-k}{q}\biggr]=
		\frac{N-k}{q}c_r(k)+O\big(|c_r(k)|\big).
	\end{eqnarray}	
	Now, we consider the case $r\not|q$ and write
	\begin{eqnarray}
		S_N(q,k;r)&=&\nonumber\sum_{m\le (N-k)/q}c_r(k+qm)\\
		&=&\nonumber
		\sum_{j\in\ZZ_r^*}e(jk/r)\sum_{m\le (N-k)/q}e(jqm/r)
		\ll \sum_{j\in\ZZ_r^*}\biggl\Vert \frac{jq}{r}\biggr\Vert^{-1},
	\end{eqnarray}
	where we have applied the inequality \cite[Ch.\ 8]{[IK]}
	$$
	\sum_{m\le x}e(m\beta)\ll \min(x, \Vert \beta\Vert^{-1})\quad \forall x,\beta\in\RR.
	$$
	Let us set $t=(q,r)$. Since $(q/t,r/t)=1$, it turns out that 
	$$
	\sum_{j\in\ZZ_r^*}\biggl\Vert \frac{jq}{r}\biggr\Vert^{-1}
	=
	\sum_{{j\le r}\atop {j\not \equiv 0\, (r/t)} }\biggl\Vert \frac{jq/t}{r/t}\biggr\Vert^{-1}
	\le t \sum_{j'<r/t}\biggl\Vert \frac{j'}{r/t}\biggr\Vert^{-1}
	\ll r \sum_{j'<r/t}{1\over {j'}}
	\ll r\log r.
	$$
	2)  By applying the Chinese Remainder Theorem \cite[Th.\ 5.26]{[A1]}, we see that
	$$
	S_N(q,k;r)=
	\sum_{t=1}^rc_r(t)\sum_{{n\le N\atop n\equiv k\, (q)}\atop n\equiv t\, (r)}1=
	\sum_{t=1}^rc_r(t)\sum_{n\le N\atop n\equiv kr'r+tq'q\, (qr)}1.
	$$
	The conclusion follows immediately by using \eqref{ramaperiodic}, after noticing that 
	$$
	\sum_{n\le N\atop n\equiv kr'r+tq'q\, (qr)}1=[N/qr]+\nu_N(t,k).
	$$
	The proposition is completely proved.
\end{proof}

\begin{proposition} Let $q,r,N\in\NN$ be coprime, with $r\ge 2$, and let $k\in\ZZ$ be given.
	
	\noindent	
	1) $S_{N_1}(q,k;r)=S_{N_2}(q,k;r)$  for any $N_1,N_2\in\NN$ such that $N_1\equiv N_2\ \Mod {qr}$.
	
	\noindent	
	2) If, in addition, $q\ge 2$ and $k\in\ZZ_q^*$, then
	$$
	\sum_{N=1}^{qr}S_N(q,k;r)=-\sum_{N=1}^{qr}S_N(q,q-k;r).
	$$
\end{proposition}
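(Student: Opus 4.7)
Both parts reduce to the explicit formula
\[
S_N(q,k;r)=\sum_{t=1}^r c_r(t)\bigl([N/qr]+\nu_N(t,k)\bigr)
\]
from part 2 of the preceding proposition, combined with the vanishing $\sum_{t=1}^r c_r(t)=0$ (valid for $r\ge 2$ by \eqref{ramaperiodic}). For (1), this vanishing annihilates the $[N/qr]$ contribution, leaving $S_N(q,k;r)=\sum_{t=1}^r c_r(t)\nu_N(t,k)$; since $\nu_N(t,k)$ depends on $N$ only through the residue $R=N-[N/qr]\,qr$, the value of $S_N(q,k;r)$ is invariant under any shift of $N$ by a multiple of $qr$, which is exactly the claim.

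For (2), my plan is to interchange the order of summation. Writing $A_j=\sum_{N=1}^{qr}S_N(q,j;r)$ and counting, for each eligible $n$, the number of $N\in[n,qr]$, one obtains
\[
A_k=\sum_{\substack{1\le n\le qr\\ n\equiv k\,(q)}}c_r(n)(qr-n+1),\qquad
A_{q-k}=\sum_{\substack{1\le n\le qr\\ n\equiv -k\,(q)}}c_r(n)(qr-n+1).
\]
In $A_{q-k}$ I would substitute $n\mapsto qr-n$ and invoke the evenness and $r$-periodicity of $c_r$ (both visible from \eqref{RamSum}), giving $c_r(qr-n)=c_r(n)$; the congruence $n\equiv -k\,(q)$ turns into $n\equiv k\,(q)$, and the hypothesis $k\in\ZZ_q^*$ with $q\ge 2$ ensures $k\not\equiv 0\,(q)$, so the boundary values $n\in\{0,qr\}$ are absent and the two index sets coincide exactly. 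Thus $A_{q-k}=\sum c_r(n)(n+1)$ over the same congruence class $n\equiv k\,(q)$, and adding the two expressions collapses the weight to $qr+2$:
\[
A_k+A_{q-k}=(qr+2)\,S_{qr}(q,k;r).
\]

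Finally, specializing the formula above at $N=qr$ gives $R=0$, so every $\nu_N(t,k)$ vanishes and $S_{qr}(q,k;r)=\sum_{t=1}^r c_r(t)=0$ by \eqref{ramaperiodic} once more. Hence $A_k+A_{q-k}=0$, which is (2). The argument is essentially bookkeeping with no substantive obstacle; the only step that demands care is verifying that the substitution in $A_{q-k}$ lands on precisely the same index set as $A_k$, which relies on the nonvanishing of $k$ modulo $q$ supplied by $k\in\ZZ_q^*$ and $q\ge 2$.
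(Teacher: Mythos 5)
Your proof is correct and follows essentially the same route as the paper: part (1) via the explicit formula of the preceding proposition together with $\sum_{t=1}^r c_r(t)=0$, and part (2) by interchanging the order of summation to obtain the weight $qr-n+1$ and then exploiting the reflection $n\mapsto qr-n$ together with the evenness and periodicity of $c_r$ and the nonvanishing of $k$ modulo $q$. The only cosmetic difference is that in (2) the paper evaluates each side separately through the CRT solution $n(k,t)\in[1,qr]$ and matches terms via $n(q-k,r-s)=qr-n(k,s)$, whereas you add the two sums and collapse them to $(qr+2)S_{qr}(q,k;r)=0$; both hinge on exactly the same identities.
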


\begin{proof} 1) It suffices to observe that $S_N(q,k;r)=0$ for any $N\equiv 0\ \Mod {qr}$ by applying the second property of the previous proposition.
	
	\noindent
	2) As before, we write
	$$
	\sum_{N=1}^{qr}S_N(q,k;r)=\sum_{t=1}^rc_r(t)\sum_{N=1}^{qr}\sum_{n\le N\atop n\equiv kr'r+tq'q\, (qr)}1.
	$$
	Now, we see that
	\begin{eqnarray}
		\sum_{N=1}^{qr}\sum_{n\le N\atop n\equiv kr'r+tq'q\, (qr)}1
		&=&\nonumber\sum_{n\le qr\atop n\equiv kr'r+tq'q\, (qr)}\sum_{N=n}^{qr}1\\
		&=&\nonumber
		\sum_{n\le qr\atop n\equiv kr'r+tq'q\, (qr)}(qr-n+1)=qr+1-n(k,t),
	\end{eqnarray}
	where  $n(k,t)$ denotes the only solution of $x\equiv kr'r+tq'q\, \Mod {qr}$ such that
	$1\le x\le qr$.
	Thus, by using \eqref{ramaperiodic} one has
	$$
	\sum_{N=1}^{qr}S_N(q,k;r)=-\sum_{t=1}^rc_r(t)n(k,t),
	$$
	$$
	\sum_{N=1}^{qr}S_N(q,q-k;r)=-\sum_{t=1}^rc_r(t)n(q-k,t).
	$$
	Since $c_r(r-t)=c_r(-t)=c_r(t)$ for any $t\in\{1,\ldots,r\}$, we see that
	$$
	\sum_{N=1}^{qr}S_N(q,q-k;r)
	=-\sum_{t=1}^rc_r(r-t)n(q-k,t)
	=-\sum_{s=0}^{r-1}c_r(s)n(q-k,r-s).
	$$
	The conclusion follows after noticing that $c_r(0)=c_r(r)$ and for any $s\in\{0,1,\ldots,r-1\}$ one has
	
	$$
	n(q-k,r-s)\equiv (q-k)r'r+(r-s)q'q \equiv -n(k,s)\, \Mod {qr}.
	$$
	The proposition is completely proved.
\end{proof}
\begin{remark} An immediate consequence of the latter property is that 
	$$
	\sum_{N=1}^{qr}\sum_{k\in\ZZ_q^*}S_N(q,k;r)=\sum_{N=1}^{qr}\sum_{n\le N\atop (n,q)=1}c_r(n)=0
	$$
	for any coprime $q,r\ge 2$.
\end{remark}


\begin{thebibliography}{9}
	
	
	\bibitem{[A1]} T.M. Apostol, \textit{Introduction to Analytic Number Theory}, Springer, New York, 1976.
	
	\bibitem{[Coh1]} E. Cohen, Representations of even functions (mod r), II. Cauchy products, \textit{Duke Math. J.}, {\bf 26}, (1959), 165--182.
	
	\bibitem{[Coh2]}  E. Cohen,  A class of residue systems (mod $r$) and related arithmetical functions I. A generalization of M\"obius Inversion, \textit{Pacific J. Math.}, {\bf 9}, (1959), 13--23.
	
	\bibitem{[Co]} G. Coppola, An elementary property of correlations, \textit{Hardy-Ramanujan J.}, {\bf 41}, (2018), 68--76.
	
	\bibitem{[CM]} G. Coppola, M.R. Murty, Finite Ramanujan expansions and shifted convolution sums of arithmetical functions, II,  \textit{J. Number Theory}, {\bf 185},
	(2018), 16--47.
	
	\bibitem{[De]} H. Delange,  On Ramanujan expansions of certain arithmetical functions, \textit{Acta Arith.}, {\bf 31}, (1976), 259--270. 
	
	\bibitem{[GP]} H.G. Gadiyar, R. Padma, Ramanujan-Fourier series, the Wiener-Khintchine formula and the distribution of prime pairs, \textit{Physica A.}, {\bf 269}, (1999), 503--510.
	
	\bibitem{[HL]} G.H. Hardy, J.E. Littlewood, Some problems of partitio numerorum. III: On the expression of a number as a sum of primes, \textit{Acta Math.}, {\bf 44}, (1923), 1--70.
	
	
	\bibitem{[IK]} H. Iwaniec, E. Kowalski, \textit{Analytic Number Theory}, AMS Colloquium Publications, {\bf 53}, Providence, RI, 2004.
	
	\bibitem{[MC]} P.J. McCarthy, \textit{Introduction to Arithmetical Functions}, Springer, Berlin, 1986.
	
	\bibitem{[MV]} H.L. Montgomery, R.C. Vaughan, \textit{Multiplicative Number Theory. I. Classical Theory}, Studies in Advanced Math., {\bf 97}, Cambridge University Press, Cambridge, 2007.
	
	\bibitem{[M]} M.R. Murty, Ramanujan series for arithmetical functions, \textit{Hardy-Ramanujan J.}, {\bf 36}, (2013), 21-33. 
	
	\bibitem{[R]} S. Ramanujan, On certain trigonometrical sums and their application to the theory of numbers, \textit{Transactions Cambr. Phil. Soc.}, {\bf 22}, (1918), 259--276. 
	
	\bibitem{[ScSp]} W. Schwarz, J. Spilker,  \textit{Arithmetical functions,  (An introduction to elementary and analytic properties of arithmetic functions and to some of their almost-periodic properties)}, London Mathematical Society Lecture Note Series, {\bf 184}, Cambridge University Press, Cambridge, 1994. 
	
	
	\bibitem{[T]} L. T\'oth, Menon-type identities concerning Dirichlet characters, 
	\textit{Int. J. Number Theory}, {\bf 14}, (2018), 1047--1054.
	
	
	\bibitem{[W]} A. Wintner, \textit{Eratosthenian averages}, Waverly Press, Baltimore, MD, 1943. 
	
	
\end{thebibliography}
\end{document}